\newtheorem{thm}{Theorem}
\newtheorem{prop}[thm]{Proposition}
\newtheorem{cor}[thm]{Corollary}
\theoremstyle{definition}
\newtheorem{definition}[thm]{Definition}
\newtheorem{rem}[thm]{Remark}
\newcommand{\uC}{\mathbf{C}}
\title{Properties of the corolla polynomial of a 3-regular graph}
\author{Dirk Kreimer}
\address{Humboldt U., Depts. of Math and Phys, Unter den Linden 6, 10099 Berlin, Germany}
\author{Karen Yeats}
\address{Dept. of Math, Simon Fraser University, 8888 University Dr, Burnaby BC, Canada}
\thanks{Dirk Kreimer is supported by the Alexander von
Humboldt Foundation and the BMBF through an Alexander von Humboldt Professorship.  Karen Yeats is partially supported by an NSERC discovery grant and would like to thank Humboldt University for its hospitality and Spencer Bloch for asking about universal corolla polynomials.}
\begin{document}
\begin{abstract}
We investigate combinatorial properties of a graph polynomial indexed by half-edges of a graph which was introduced recently
to understand the connection between Feynman rules for scalar field theory and Feynman rules for gauge theory. We investigate the new graph polynomial as a stand-alone object.
\end{abstract}

\maketitle

\section{Introduction} Recently, one of us (DK) defined from a 3-regular graph $G$ a polynomial $C(G)$ in variables indexed by the half-edges of the graph.  $C(G)$ is manufactured to bootstrap gauge theory Feynman rules from scalar graphs. 
Replacing the half-edge variables in $C(G)$ by suitable differential operators makes $C(G)$ act as a differential operator $C^ D(G)$ on the scalar integrand 
\[
\frac{e^{-\frac{\phi(G)}{\psi(G)}}}{\psi^2(G)},
\]
where $\psi,\phi$ are the first and second Symanzik polynomials\footnote{The first Symanzik polynomial is also known as the Kirchhoff polynomial (or the dual Kirchhoff polynomial, depending on conventions.)}. As a consequence, from the sum of connected 3-regular graphs at a given loop order, we obtain the full gauge theory amplitude at that loop order though the action of $C^ D(G)$, where 4-valent vertices are generated through a residue operation reflecting graph homology, while internal ghost loops are generated from the structure of $C(G)$.

These applications to gauge theory and physics are worked out in a collaboration of DK with Matthias Sars and Walter van Suijlekom
\cite{KSvS}.

The purpose of this short note is to describe some combinatorial properties of $C(G)$ and generally make the argument that $C(G)$ is a nice object  in mathematics.

For the purposes of this paper graphs will be viewed as constructed out of half-edges.  Edges in the usual sense are pairs of half-edges, and will be known as \emph{internal edges}.  Unmatched half edges are also allowed and are called \emph{external edges}.  Multiple edges and loops in the sense of graph theory\footnote{tadpoles} are also allowed.  For all but the final section we are concerned with such graphs where each vertex is incident to exactly 3 half edges.  We will call these graphs 3-regular, but note that the external edges contribute to the valence, so only those with no external edges are 3-regular graphs in the usual sense.

Graphs in this sense are the correct object to describe the underlying structure of a Feynman diagram, ignoring the details of particle content, indices, etc.

Let $G$ be such a 3-regular graph.  We need the following definitions
\begin{definition}
\mbox{}

\begin{enumerate}
\item To a half-edge $j$ of $G$ associate the variable $a_j$.
\item For a vertex $v$ of $G$ let $n(v)$ be the set of half-edges incident to $v$.
\item For a vertex $v$ of $G$ let $D_v = \sum_{j \in n(v)} a_{j}$.
\item Let $\mathcal{C}$ be the set of all cycles\footnote{For the purposes of this paper cycle always means simple cycle, that is, no repeated vertices are allowed.} of $G$.
\item For $C$ a cycle and $v$ a vertex in $V$, since $G$ is 3-regular, there is a unique half-edge of $G$ incident to $v$ and not in $C$, let $o(C,v)$ be this half-edge.
\item For $i\geq 0$ let 
  \[
  C^i(G) = \sum_{\substack{C_1,C_2,\ldots C_i \in \mathcal{C} \\ C_j \text{pairwise disjoint}}} \left(\left(\prod_{j=1}^{i} \prod_{v \in C_j}a_{o(C_j,v)}\right)\prod_{v \not\in C_1\cup C_2\cup \cdots \cup C_i} D_v\right)
  \]
\item Let
  \[
   C(G) = \sum_{j \geq 0} (-1)^j C^j(G)
  \]
\end{enumerate}
\end{definition}

Note that $C(G)$ is a polynomial because $C^i(G)=0$ for $i>|\mathcal{C}|$.

\section{Properties}

The first nice property is that $C(G)$ counts something and hence has all nonnegative coefficients -- in fact all monomials appear with coefficient $0$ or $1$.

\begin{thm}\label{bin reformulation}
  Let $\mathcal{T}$ be the set of sets $T$ of half edges of $G$ with the property that
  \begin{itemize}
    \item every vertex of $G$ is incident to exactly one half edge of $T$
    \item $G\smallsetminus T$ has no cycles
  \end{itemize}
  Then
  \[
  C(G) = \sum_{T\in \mathcal{T}} \prod_{h\in T} a_h
  \]
\end{thm}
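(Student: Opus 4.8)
The plan is to prove the identity by extracting the coefficient of each monomial on both sides. First I would observe that when every $D_v$ in the definition of $C^i(G)$ is fully expanded, each resulting term is a product of one variable $a_h$ per vertex: the factor $a_{o(C_j,v)}$ for a vertex $v$ lying on one of the chosen cycles, and a single summand of $D_v$ for each remaining vertex. Since each half edge is incident to a unique vertex, every such monomial is squarefree of degree $|V|$ and selects exactly one half edge at each vertex; equivalently it has the form $\prod_{h\in T}a_h$ for a set $T$ meeting every vertex exactly once. Thus both sides are supported on the same monomials, and it suffices to show that the coefficient of $\prod_{h\in T}a_h$ in $C(G)$ is $1$ when $G\setminus T$ is acyclic and $0$ otherwise.

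Fix such a $T$ and ask which terms of $C^j(G)$ contribute $\prod_{h\in T}a_h$. A chosen cycle $C_k$ contributes the factor $a_{o(C_k,v)}$ at each of its vertices $v$, and this must equal the unique half edge of $T$ at $v$; equivalently $C_k$ uses at $v$ the two half edges other than the one in $T$. Hence $C_k$ contributes to this monomial precisely when it avoids $T$ at each of its vertices, i.e.\ when $C_k$ is a cycle of $G\setminus T$. For every vertex off the chosen cycles there is exactly one summand of $D_v$ equal to its $T$ half edge, so those vertices contribute in a unique way. Therefore the coefficient of $\prod_{h\in T}a_h$ equals $\sum_{j\ge 0}(-1)^j N_j$, where $N_j$ is the number of $j$-element sets of pairwise disjoint cycles of $G\setminus T$.

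The key structural step, which I expect to carry the argument, is that in $G\setminus T$ these cycles are automatically pairwise disjoint. Because $G$ is 3-regular and $T$ removes exactly one half edge at each vertex, every vertex of $G\setminus T$ retains only two half edges and hence lies on at most two edges; thus $G\setminus T$ has maximum degree two and is a disjoint union of paths and cycles, so its cycles are pairwise vertex-disjoint. Consequently, if $G\setminus T$ has exactly $c$ cycles then every subset of them is a valid disjoint collection, so $N_j=\binom{c}{j}$ and the coefficient collapses to $\sum_{j\ge 0}(-1)^j\binom{c}{j}=(1-1)^c$, which is $1$ if $c=0$ and $0$ if $c\ge 1$. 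This is exactly the indicator that $G\setminus T$ is acyclic, that is, that $T\in\mathcal{T}$, which completes the proof.

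Two points deserve care. The main obstacle is the disjointness observation above: without it, $\sum_j(-1)^j N_j$ is an Euler-characteristic-type alternating sum over the simplicial complex of disjoint cycle collections, which need not vanish for a general graph; it is precisely the degree-two structure forced by 3-regularity together with the one-half-edge-per-vertex condition on $T$ that makes the surviving cycles genuinely independent and turns the sum into the binomial identity. The second, more routine, point is to read the sum defining $C^i(G)$ as ranging over unordered sets of $i$ pairwise disjoint cycles, so that $N_j$ counts sets rather than ordered tuples and the binomial collapse is exact.
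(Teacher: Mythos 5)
Your proposal is correct and follows essentially the same argument as the paper's proof: observe that every monomial selects one half edge per vertex, note that $G\smallsetminus T$ is 2-regular and hence a disjoint union of paths and cycles so that the contributing cycle collections are exactly the subsets of the $k$ cycles of $G\smallsetminus T$, and collapse the alternating sum to $(1-1)^k$. The extra care you take in justifying the automatic disjointness of the cycles of $G\smallsetminus T$ is a point the paper leaves implicit, but it is the same proof.
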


\begin{proof}
  First notice that every monomial in each $C^i(G)$ includes exactly one variable for each vertex of $G$.  This is because if a vertex $v$ is in one of the cycles, then it is in exactly one of the cycles as the cycles are disjoint, and so appears once as a $a_{o(C,v)}$, and if $v$ is not in one of the cycles, then $v$ contributes $D_v$.

  Consider a set $T$ of half edges of $G$ with the property that every vertex of $G$ is incident to exactly one half edge of $T$. Note that $G\smallsetminus T$ is 2-regular, and so consists of a disjoint union of cycles and lines;  lines are possible because of the external edges.  Let $k$ be the number of cycles of $G\smallsetminus T$. 

  Now we wish to count how many times $\prod_{h \in T}a_h$ appears in $C^j(G)$.   $\prod_{h \in T}a_h$ appears once for every set of cycles $C_1, C_2, \ldots C_j$ with the property that $a_{o(C_i,v)} \in T$ for all $v \in C_1, C_2\ldots C_j$, that is whenever $C_1\cup C_2 \cup \cdots \cup C_j \subseteq G\smallsetminus T$.  There are $\binom{k}{j}$ ways this can occur.

  Thus the number of times $\prod_{h \in T}a_h$ appears in $C(G)$ is  
  \[
     \sum_{\ell = 0}^{k} (-1)^\ell\binom{k}{\ell} \\
     = \begin{cases} (1-1)^k = 0 & k \neq 0 \\
    1 & k = 0\end{cases}
  \]
The result follows.
\end{proof}

\begin{rem}
For a graph $G$, let $E$ be a set of 
pairwise disjoint internal edges of $G$.
For $i\geq 0$ let 
  \[
  C^i_E(G) = \sum_{\substack{C_1,C_2,\ldots C_i \in \mathcal{C} \\ C_j \text{ pairwise disjoint} \\ C_j\cap E=\emptyset}} \left(\left(\prod_{j=1}^{i} \prod_{v \in C_j}a_{o(C_j,v)}\right)\prod_{v \not\in C_1\cup C_2\cup \cdots \cup C_i\cup E} 
  D_v\right)
  \]
  where the sum forbids cycles from sharing either vertices or edges with $E$.
\item Let
  \[
   C_E(G) = \sum_{j \geq 0} (-1)^j C^j_E(G).
  \]
Then,
\[
C_E(G)=C(G- E)
\]  
where $G-E$ is the graph with the edges and vertices involved in $E$ removed.  Removing a vertex removes all its indicent half-edges so that $2|E|$ new external edges are generated. 
Note that $C_\emptyset(G)=C(G)$.
\end{rem}

As a direct consequence of Theorem \ref{bin reformulation} we have the following corollary
\begin{cor}\label{cor mult}
  Let $G$ be the disjoint union of $G_1$ and $G_2$, then
  \[
    C(G) = C(G_1)C(G_2)
  \]
\end{cor}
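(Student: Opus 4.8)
The plan is to apply the combinatorial reformulation of Theorem~\ref{bin reformulation} to both sides and show that the set $\mathcal{T}$ associated to $G$ factors as a product of the corresponding sets for $G_1$ and $G_2$. Since $G$ is the disjoint union of $G_1$ and $G_2$, its half-edges are the disjoint union of those of $G_1$ and those of $G_2$, and every vertex of $G$ belongs to exactly one of the two components. Consequently any set $T$ of half-edges of $G$ splits uniquely as $T = T_1 \sqcup T_2$, where $T_i$ collects the half-edges of $T$ lying in $G_i$. Writing $\mathcal{T}(G)$, $\mathcal{T}(G_1)$, $\mathcal{T}(G_2)$ for the three sets to keep track of the components, I aim to establish a bijection $\mathcal{T}(G) \cong \mathcal{T}(G_1) \times \mathcal{T}(G_2)$.

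First I would check that the two defining conditions of $\mathcal{T}$ decompose along this splitting. The condition that every vertex of $G$ meets exactly one half-edge of $T$ is equivalent to the conjunction of the same condition for $G_1$ with $T_1$ and for $G_2$ with $T_2$, since each vertex lies in a single component and all of its incident half-edges belong to that component. For the acyclicity condition, the key observation is that a disjoint union has no edges joining the two components, so every cycle of $G$ lies entirely within $G_1$ or entirely within $G_2$; hence $G \smallsetminus T$ is acyclic if and only if both $G_1 \smallsetminus T_1$ and $G_2 \smallsetminus T_2$ are acyclic. Together these two facts identify $\mathcal{T}(G)$ with the product $\mathcal{T}(G_1) \times \mathcal{T}(G_2)$.

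With the bijection in hand, the conclusion follows from the factorization of monomials and the distributive law: because the half-edge sets are disjoint, $\prod_{h \in T} a_h = \bigl(\prod_{h \in T_1} a_h\bigr)\bigl(\prod_{h \in T_2} a_h\bigr)$, so that
\[
C(G) = \sum_{T \in \mathcal{T}(G)} \prod_{h\in T} a_h = \Bigl(\sum_{T_1 \in \mathcal{T}(G_1)} \prod_{h\in T_1} a_h\Bigr)\Bigl(\sum_{T_2 \in \mathcal{T}(G_2)} \prod_{h\in T_2} a_h\Bigr) = C(G_1)C(G_2).
\]
I do not anticipate a genuine obstacle here: the one point requiring explicit justification is that a cycle cannot span both components, which is immediate from the absence of connecting edges in a disjoint union. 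Everything else is bookkeeping verifying that the vertex-incidence and acyclicity conditions respect the component decomposition, after which the interchange of sum and product finishes the argument.
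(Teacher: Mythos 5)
Your proof is correct and follows exactly the route the paper intends: the corollary is stated there as a direct consequence of Theorem~\ref{bin reformulation}, with no written proof, and your factorization of $\mathcal{T}(G)$ as $\mathcal{T}(G_1)\times\mathcal{T}(G_2)$ together with the splitting of monomials is precisely the omitted argument. Nothing is missing.
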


\begin{definition}\label{c def}
  For $T$ a set of half edges of $G$ let 
   $c(T)$ be the sum of 
  \begin{itemize}
    \item the number of connected components of $G \smallsetminus T$
    \item the number of external edges in $T$
    \item the number of internal edges with for which both half edges are in $T$
  \end{itemize}
 \end{definition}

$c(T)$ is the correct notion of number of components for $G\smallsetminus T$.  As one simple example, using Euler's formula and 3-regularity one can easily compute that for $G$ connected and $T \in \mathcal{T}$
\[
c(T) = \frac{v+e_{\text{ext}}}{2}
\]
where $v$ is the number of vertices of $G$ and $e_{\text{ext}}$ is the number of external edges of $G$.  $c(T)$ is also useful for the universal corolla polynomial of the next section.

%
%
%

The second nice property of $C(G)$ is that there is a nice formula for how $C(G)$ decomposes upon removing a vertex.  Note that when we remove a vertex in a graph then we remove the half edges incident to the vertex, but we leave the other half of any affected edges.  These remaining halves are now external edges.

\begin{prop}\label{recurrence}
  Let $v$ be a vertex of $G$ which is not incident to an external edge of $G$ nor incident to both ends of an internal edge.  Label the half edges incident to $v$ by $1$, $2$, and $3$.  Let $H_i$ for $i=1,2,3$ be $G$ with $v$ removed and the two half edges which were not paired with half edge $i$ joined to make a new internal edge.  Then
  \[
    C(G) = a_{1}C(H_1) + a_{2}C(H_2) + a_{3}C(H_3)
  \]
  In pictures
  \[
  C\left(\raisebox{-.7cm}{\includegraphics[scale=0.7]{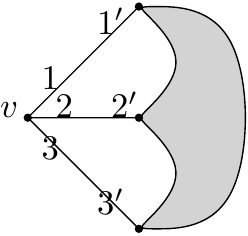}}\right) = a_{1}C\left(\raisebox{-.7cm}{\includegraphics[scale=0.7]{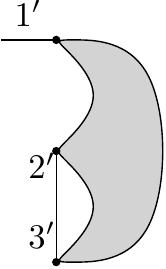}}\right) + a_{2}C\left(\raisebox{-.7cm}{\includegraphics[scale=0.7]{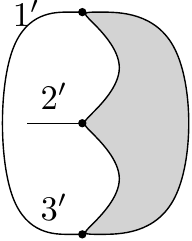}}\right) + a_{3}C\left(\raisebox{-.7cm}{\includegraphics[scale=0.7]{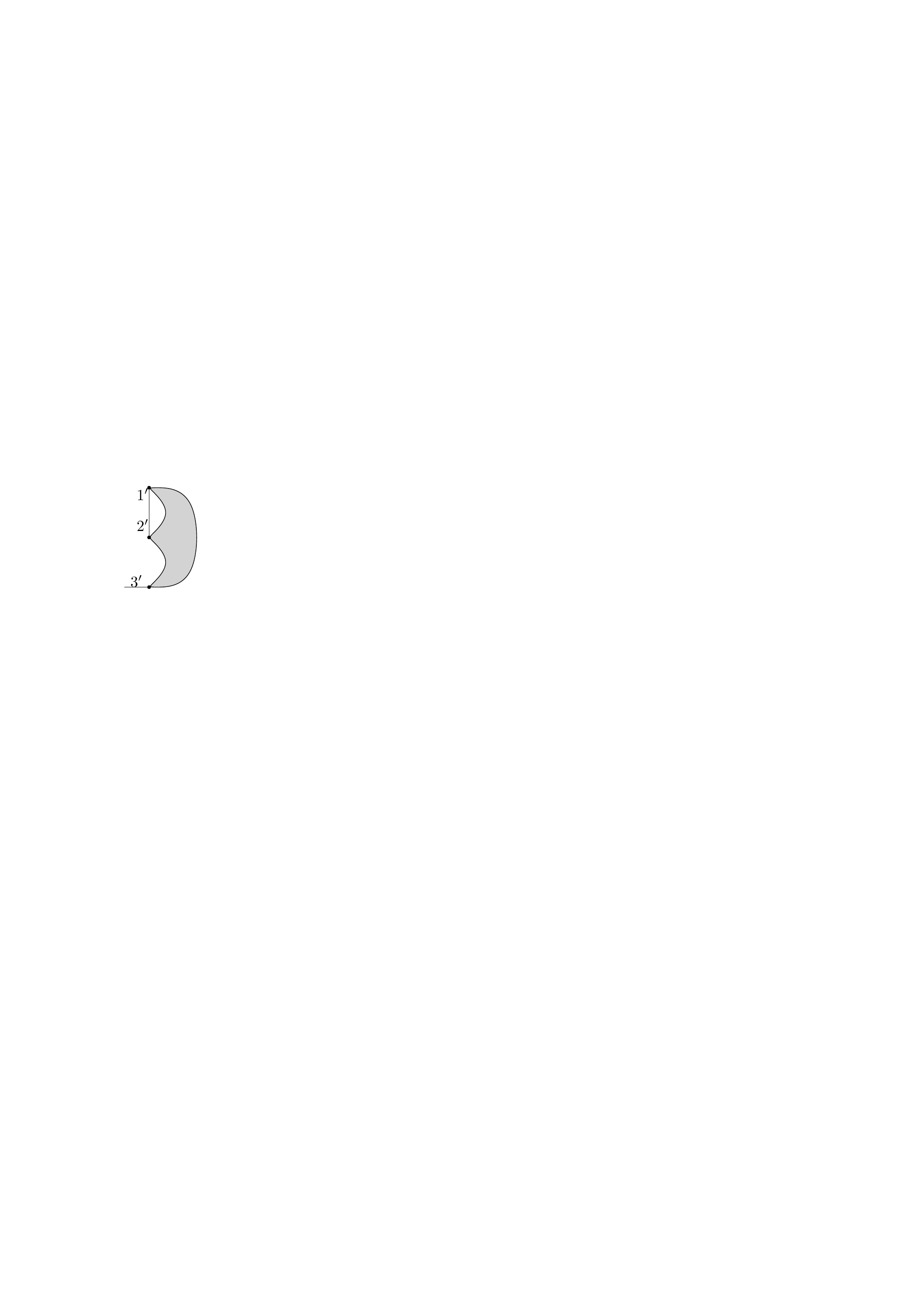}}\right)
  \]
\end{prop}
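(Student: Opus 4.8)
The plan is to argue combinatorially from the reformulation in Theorem~\ref{bin reformulation}, which writes $C(G)=\sum_{T\in\mathcal{T}}\prod_{h\in T}a_h$ as a sum over the family $\mathcal{T}$ of admissible half edge sets. Since $v$ is incident only to the half edges $1,2,3$, the first defining property of $\mathcal{T}$ forces each $T\in\mathcal{T}$ to contain exactly one of $1,2,3$; this partitions $\mathcal{T}$ into three classes $\mathcal{T}_1,\mathcal{T}_2,\mathcal{T}_3$ according to which of the three lies in $T$. Factoring the corresponding variable out of each monomial gives
\[
C(G)=\sum_{i=1}^3 a_i\sum_{T\in\mathcal{T}_i}\prod_{h\in T\smallsetminus\{i\}}a_h,
\]
so it suffices to identify each inner sum with $C(H_i)$.

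The key step is a bijection between $\mathcal{T}_i$ and the admissible family $\mathcal{T}(H_i)$ for $H_i$, given by $T\mapsto T\smallsetminus\{i\}$. The two hypotheses on $v$ guarantee that the half edges $1,2,3$ have partners $1',2',3'$ distinct from $1,2,3$, so that $H_i$ is well defined; moreover its half edges are exactly those of $G$ other than $1,2,3$, carrying the same variables, so no relabeling is needed and once the bijection is established the inner sum is literally $\sum_{T'\in\mathcal{T}(H_i)}\prod_{h\in T'}a_h=C(H_i)$. I would check directly that $T\smallsetminus\{i\}$ meets every vertex of $H_i$ exactly once — immediate, since deleting $i$ only affects $v$, which has been removed, and the other two half edges at $v$ were never in $T$ — and conversely that adjoining $i$ to any $T'\in\mathcal{T}(H_i)$ returns an element of $\mathcal{T}_i$. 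Injectivity and surjectivity are then formal.

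The step demanding the most care is the acyclicity condition. Writing $j,k$ for the two half edges in $\{1,2,3\}\smallsetminus\{i\}$ and $j',k'$ for their partners, in $G\smallsetminus T$ the vertex $v$ has degree $2$, incident to $j$ and $k$, and passing to $H_i\smallsetminus(T\smallsetminus\{i\})$ simply smooths $v$: the two edges meeting at $v$ are replaced by the single edge $\{j',k'\}$, while the partner $i'$ is an external edge in both graphs. Conversely one recovers $G\smallsetminus T$ by subdividing $\{j',k'\}$ with the vertex $v$. As in the proof of Theorem~\ref{bin reformulation} both graphs are disjoint unions of paths and cycles, and smoothing or subdividing a degree-$2$ vertex is a homeomorphism of the underlying topological graph, so it preserves the first Betti number and hence the number of cycle components; thus $G\smallsetminus T$ is acyclic if and only if $H_i\smallsetminus(T\smallsetminus\{i\})$ is. The main obstacle is to phrase this equivalence so that it correctly handles the degenerate configuration in which $j'$ and $k'$ meet at a common vertex — which produces a digon in $G\smallsetminus T$ and a tadpole in $H_i\smallsetminus(T\smallsetminus\{i\})$, both counting as cycles — and the Betti-number formulation dispatches all such cases uniformly, confirming that the correspondence restricts to the admissible families. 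Summing over $i=1,2,3$ then yields the claimed identity.
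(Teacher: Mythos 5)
Your proposal is correct and follows essentially the same route as the paper: both partition the terms of $C(G)$ (via the reformulation of Theorem~\ref{bin reformulation}) according to which half edge at $v$ lies in $T$, and identify each class with the terms of $C(H_i)$ by the bijection $T\mapsto T\smallsetminus\{i\}$, the acyclicity condition being preserved because passing to $H_i$ just smooths the degree-two vertex $v$ in $G\smallsetminus T$. The paper states this in two sentences; your version merely makes the bijection and the degenerate (digon/tadpole) cases explicit.
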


Note that $G\smallsetminus v$ does not need to be connected.

\begin{proof}
  Let $i'$ label the other half edge paired with half edge $i$ for $i=1,2,3$.

  Collect together those terms of $C(G)$ where the half edge $1$ is removed.  These are exactly the ways of removing a half edge from each remaining vertex so as to result in no cycles of $G\smallsetminus v$ and so that joining the remaining $2'$ and $3'$ does not cause a cycle.  These are exactly the terms of $C(H_1)$.

The argument runs likewise for $i=2$ and $i=3$.
\end{proof}

The previous proposition gives the basic recurrence from which we can build and understand $C(G)$.  Its role is comparable to a contraction-deletion formula, but it is vertex based since we are working with corolla polynomials.  We still need to characterize the recurrence when external edges or loops in the sense of graph theory are involved, and we need to give the base case.  These are listed in picture form in the following proposition.  The proofs are as above and hence are not given.

\begin{prop}\label{other cases}
\allowdisplaybreaks
  \begin{align*}
  C\left(\raisebox{-.4cm}{\includegraphics[scale=0.7]{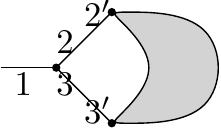}}\right) & = a_{1}C\left(\raisebox{-.4cm}{\includegraphics[scale=0.7]{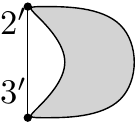}}\right) + (a_{2}+a_{3})C\left(\raisebox{-.4cm}{\includegraphics[scale=0.7]{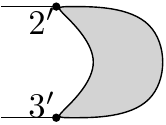}}\right) \\
  C\left(\raisebox{-.4cm}{\includegraphics[scale=0.7]{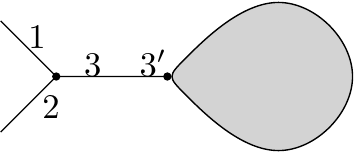}}\right) & = (a_1+a_2+a_3)C\left(\raisebox{-.4cm}{\includegraphics[scale=0.7]{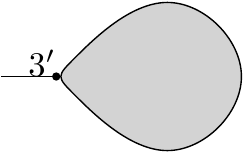}}\right) \\
  C\left(\raisebox{-.4cm}{\includegraphics[scale=0.7]{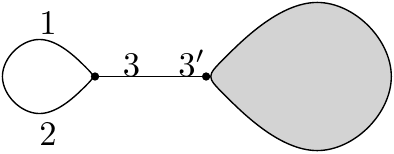}}\right) & = (a_1+a_2)C\left(\raisebox{-.4cm}{\includegraphics[scale=0.7]{H14}}\right) \\
  C\left(\raisebox{-.4cm}{\includegraphics[scale=0.7]{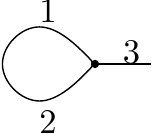}}\right) & = a_1+a_2 \\
  C\left(\raisebox{-.4cm}{\includegraphics[scale=0.7]{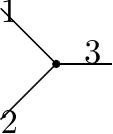}}\right) & = a_1+a_2+a_3
  \end{align*}
\end{prop}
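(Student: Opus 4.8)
The plan is to deduce all five identities uniformly from the counting description of Theorem \ref{bin reformulation}, in the same spirit as the proof of Proposition \ref{recurrence}, rather than from the defining cycle sum. Writing $C(G)=\sum_{T\in\mathcal{T}}\prod_{h\in T}a_{h}$ and recalling that every vertex is incident to exactly one half-edge of $T$, I would partition $\mathcal{T}$ according to which of the three half-edges at the marked vertex $v$ belongs to $T$. Each resulting class contributes a common factor $a_{i}$, and the remaining sum should range over precisely the sets $\mathcal{T}$ of the reduced graph drawn on the right-hand side; showing that this remaining sum equals $C$ of that reduced graph is the content of each case.

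I would begin with the last two (base) identities, where $v$ is the entire connected component. Here $G\smallsetminus v$ is empty and contributes the empty product $1$, so it suffices to list the admissible single choices. When $v$ carries three external half-edges no choice can create a cycle, so all three survive and give $a_{1}+a_{2}+a_{3}$. When $v$ carries a tadpole on $\{1,2\}$ together with one external half-edge, choosing the external half-edge leaves the tadpole in place; since a surviving tadpole is a cycle, that choice is inadmissible by the second condition of Theorem \ref{bin reformulation}, leaving only the two loop half-edges and hence $a_{1}+a_{2}$.

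For the three recurrence identities the same mechanism operates, now with a nontrivial reduced graph, and the governing dichotomy is whether the two half-edges of $v$ left outside $T$ are both internal or not. If both are internal, $v$ is a degree-two pass-through in $G\smallsetminus T$, and deleting $v$ while splicing its two partner half-edges into a single internal edge is an acyclicity-preserving bijection; this produces the join term $a_{1}C(H_{1})$ in the first identity. If instead at least one of the remaining half-edges is external, no splicing occurs: deleting $v$ merely converts the affected partner half-edges into new external half-edges. This accounts both for the $(a_{2}+a_{3})$ term of the first identity and, in the second identity (two external half-edges and one internal half-edge at $v$), for all three admissible choices collapsing to the same reduced graph. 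The third identity is the decisive one: with a tadpole on $\{1,2\}$ and an internal half-edge $3$, choosing $1$ or $2$ breaks the loop and reduces exactly as in the external case to that same reduced graph, whereas choosing $3$ leaves the tadpole as a one-cycle in $G\smallsetminus T$ and is therefore inadmissible---this is precisely why the coefficient is $(a_{1}+a_{2})$ rather than $(a_{1}+a_{2}+a_{3})$.

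The step I expect to demand the most care is the tadpole bookkeeping common to the third and fourth identities. Two points must be checked: first, that a self-loop left intact in $G\smallsetminus T$ genuinely counts as a cycle, so that omitting both loop half-edges from $T$ violates acyclicity; and second, that deleting $v$ together with one chosen loop half-edge faithfully reproduces the stated reduced graph without manufacturing a spurious cycle, the surviving loop half-edge being absorbed into the new external structure at the partner of half-edge $3$. Once this bijection is pinned down, the external-edge cases follow as routine instances of the pass-through versus new-external analysis, and multiplicativity of $C$ over components (Corollary \ref{cor mult}) handles any disconnection introduced by removing $v$.
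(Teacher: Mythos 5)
Your proposal is correct and follows exactly the route the paper intends: the paper omits the proof of Proposition \ref{other cases} with the remark that ``the proofs are as above,'' i.e.\ as in Proposition \ref{recurrence}, namely partitioning the terms of $C(G)$ (via Theorem \ref{bin reformulation}) by which half-edge at $v$ lies in $T$ and matching the residual acyclicity condition against the reduced graph. Your case analysis --- splice when both remaining half-edges are internal, plain deletion when one is external, and exclusion of the choice that leaves a tadpole intact as a surviving cycle --- is exactly the needed bookkeeping and is consistent with the paper's conventions.
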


In view of the generalization in the next section, we have chosen to give the cases in Propositions \ref{recurrence} and \ref{other cases} separately rather than unify them in view of the generalization in the next section.

\section{The universal 3-regular corolla polynomial}

A natural question now is to ask what is the most general graph polynomial satisfying Proposition \ref{recurrence}.  Of course the answer to such a question depends on our assumptions of what sorts of graph polynomials could count.  One possible answer is the following.

\begin{definition}
  Let $\mathcal{H}$ be the set of sets of half edges of $G$ containing exactly one half edge incident to each vertex.  Let $\mathbf{a} = (a_1,a_2,\ldots)$ be indeterminants, one for each half edge, and let $r$ be another indeterminant.  Define
  \[
  \uC(G,r,\mathbf{a}) = \sum_{H\in \mathcal{H}} r^{\ell(G\smallsetminus H)}\prod_{h\in H}a_h
  \]
  where $\ell(G\smallsetminus H)$ is the number of independent cycles of $G\smallsetminus H$.
\end{definition}



$\uC(G,r,\mathbf{a})$ is universal in the following sense.

\begin{thm}
  Let $R$ be a ring.  For any 3-regular graph $G$ (with external edges allowed) let $\mathbf{a}_G$ be indeterminants indexed by the half edges of $G$.  Let $f$ be a function on 3-regular graphs with $f(G) \in R[\mathbf{a}_G]$ which has the following properties
  \begin{itemize}
    \item For any vertex $v$ of $G$, $f(G)$ is homogeneous of degree 1 in the half edges incident to $v$.
    \item $f$ satisfies Proposition \ref{recurrence} and Corollary \ref{cor mult}.
  \end{itemize}
Then for any $G$,
\[
  f(G) = r_0(G)\uC(G,r_1,\mathbf{a}_G)
\]
for some $r_1 \in R$ independent of $G$ and some $r_0(G)$ in the field of fractions of $R$.  
\end{thm}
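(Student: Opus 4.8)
The plan is to use the degree-one homogeneity to turn the statement into a numerical identity among coefficients, to observe that $\uC$ itself solves that identity, and then to prove uniqueness by showing that Proposition~\ref{recurrence} together with Corollary~\ref{cor mult} determine every coefficient from the one-vertex graphs. Since $f(G)$ is homogeneous of degree $1$ in the half edges at each vertex, every monomial of $f(G)$ uses exactly one half edge per vertex, so $f(G)=\sum_{H\in\mathcal H}c_H(G)\prod_{h\in H}a_h$ for unique $c_H(G)\in R$, and the desired conclusion $f(G)=r_0(G)\,\uC(G,r_1,\mathbf a_G)$ is equivalent to
\[
c_H(G)=r_0(G)\,r_1^{\,\ell(G\smallsetminus H)}\qquad(H\in\mathcal H);
\]
that is, $c_H(G)$ should depend on $H$ only through $\ell(G\smallsetminus H)$, with a single universal base $r_1$. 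I would first record that $\uC$ satisfies all the hypotheses: homogeneity is clear, and the disjoint decomposition $\mathcal H(G)=\bigsqcup_{i=1}^{3}\{\,\{i\}\cup H':H'\in\mathcal H(H_i)\,\}$ together with the identity $\ell\big(G\smallsetminus(\{i\}\cup H')\big)=\ell(H_i\smallsetminus H')$ (suppress the degree-two vertex, or the pendant, left by removing half edge $i$ at $v$) gives Proposition~\ref{recurrence}, while additivity of $\ell$ over disjoint unions gives Corollary~\ref{cor mult}. This confirms that the claimed form is consistent and that every value of $r_1$ is realised, and it shows that, since homogeneity makes the monomials containing $a_1$, $a_2$, $a_3$ pairwise disjoint, Proposition~\ref{recurrence} is at the level of coefficients exactly the rule $c_{\{i\}\cup H'}(G)=c_{H'}(H_i)$ at a nice vertex $v$ with half edges $1,2,3$.

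I would then read $r_0$ and $r_1$ off the one-vertex graphs. For the tripod $Y$ the hypothesis that $f$ respects graph isomorphisms forces the three coefficients equal (the automorphism group is $S_3$), giving $f(Y)=r_0(Y)\,\uC(Y,r_1,\mathbf a)$; for the tadpole $T$ with external leg $1$ and loop $2,3$, the same invariance gives $c_{\{2\}}(T)=c_{\{3\}}(T)$, and comparison with $\uC(T,r,\mathbf a)=r\,a_1+a_2+a_3$ defines the universal loop weight by $c_{\{1\}}(T)=r_1\,c_{\{2\}}(T)$. For a reducible graph $G$ (one with a nice vertex $v$) the coefficient rule combined with the inductive hypothesis $c_{H'}(H_i)=r_0(H_i)\,r_1^{\ell(H_i\smallsetminus H')}$ yields $c_{\{i\}\cup H'}(G)=r_0(H_i)\,r_1^{\ell(G\smallsetminus(\{i\}\cup H'))}$; since the three families exhaust $\mathcal H(G)$ this is precisely the required form, with $r_0(G)$ equal to the common value $r_0(H_1)=r_0(H_2)=r_0(H_3)$---provided those three agree. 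Graphs with no nice vertex (every vertex meets an external edge or a self-loop) are the base cases; the disconnected ones are handled by Corollary~\ref{cor mult}, and a connected irreducible graph such as the dumbbell is pinned down by embedding it as a summand produced by reducing a larger graph (for instance a nice vertex joined to two loop-vertices and a further nice vertex), invoking Corollary~\ref{cor mult} and the agreement of the two reduction orders.

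The main obstacle is exactly the proviso ``provided those three agree'': I must show that the normalization $r_0$ is invariant under the local re-pairing move, where $H_1,H_2,H_3$ are one and the same graph $M$ (namely $G$ with $v$ deleted and the three stubs $1',2',3'$ left external) with a single pair of stubs reglued. Equivalently this is the confluence, or order-independence, of the reduction defined by Proposition~\ref{recurrence}: reducing at two distinct nice vertices $v,w$ in either order must give the same polynomial. I would establish this by a diamond argument, reducing the commutativity of the two orders to the symmetry of $\ell\big(G\smallsetminus(\{i\}\cup\{j\}\cup H'')\big)$ in the two chosen half edges, which is immediate, together with the multiplicativity of $r_0$ furnished by Corollary~\ref{cor mult} whenever one of the reductions disconnects the graph. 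Once confluence holds, $r_0(G)$ is well defined as the value obtained by fully reducing $G$ to one-vertex graphs along any choice of nice vertices, and the coefficient identity follows for every $G$. Finally I note that $r_0(G)$ genuinely lands in $\mathrm{Frac}(R)$ rather than $R$, because a corolla selection $H$ with $G\smallsetminus H$ acyclic need not exist---when $G$ has no external edges every $G\smallsetminus H$ still contains a cycle---so that $r_0(G)=c_H(G)\,r_1^{-\ell(G\smallsetminus H)}$ in general requires dividing by a power of $r_1$.
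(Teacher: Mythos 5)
Your setup (coefficient extraction via homogeneity, the identification $c_{\{i\}\cup H'}(G)=c_{H'}(H_i)$, and the verification that $\uC$ itself satisfies the hypotheses) is sound, but there is a genuine gap at exactly the point you flag, and it is larger than you acknowledge. First, the base cases of your induction are not a finite list: Proposition \ref{recurrence} applies only at a vertex incident to no external edge and no self-loop, so the ``irreducible'' graphs are all connected graphs in which \emph{every} vertex meets an external edge or a self-loop --- for instance a cycle of length $n$ with one external half edge attached at each vertex, for every $n$. Your plan handles the one-vertex graphs and gestures at the dumbbell; to deal with the whole infinite family you would need a reduction rule for removing a non-nice vertex, i.e.\ you would have to prove that $f$ satisfies the analogues of Proposition \ref{other cases} with universal constants replacing certain coefficients there. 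That derivation --- carried out in the paper by evaluating $f$ on a handful of auxiliary two-vertex configurations in both reduction orders and equating the results, which is what produces the constants $r_2$ and $r_3$ --- is the real content of the theorem, and it is absent from your argument.

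Second, the step ``provided those three agree'' is not resolved by a diamond lemma. The graphs $H_1,H_2,H_3$ arise from a \emph{single} application of Proposition \ref{recurrence} at one vertex, not from two different reduction orders, so confluence of the rewriting says nothing about $r_0(H_1)=r_0(H_2)=r_0(H_3)$; and a graph may have only one nice vertex, leaving no second order to compare. (Nor may you argue backwards from the desired form of $f(G)$, since that is what is being proved.) A smaller issue: you invoke isomorphism-invariance of $f$ to symmetrize the one-vertex values, but that is not among the stated hypotheses; the paper instead derives the symmetric values $r_2(a_1+a_2)+r_2r_3a_3$ and $r_2^2(a_1+a_2+a_3)$ from the two-way reductions. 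The remedy for all three problems is the same one the paper uses: first establish the universal reduction rules at non-nice vertices, after which uniqueness follows by an easy induction on the number of vertices and $r_0(G)=r_2^{v-\ell(G)+c}$ comes out explicitly.
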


\begin{proof}
  For the proof it will be convenient to work with the following variant of $\uC(G,r,\mathbf{a})$.  Let
  \[
  \widetilde{\uC}(G,q,r,\mathbf{a}) = \sum_{H\in \mathcal{H}} q^{c(H)}r^{\ell(G\smallsetminus H)}\prod_{h\in H}a_h
  \]
  where $c(H)$ is as defined in Definition \ref{c def}.
  Suppose we remove the half edges in $H$ from $G$ one by one; each removal either decreases the number of independent cycles by one, creates a new component, completes the removal of an internal edge of $G$, or removes an external edge of $G$.  Thus
  \[
    (\ell(G)-\ell(G\smallsetminus H)) + c(H) - c = |H| = v
  \]
  where $v$ is the number of vertices of $G$ and $c$ is the number of connected components of $G$.  Solving,
  $
  c(H) = v - \ell(G) + c + \ell(G\smallsetminus H)
  $
  and so
  \[
  \widetilde{\uC}(G,q,r,\mathbf{a}) = q^{v-\ell(G)+c}\sum_{H\in \mathcal{H}} (qr)^{\ell(G\smallsetminus H)}\prod_{h\in H}a_h = q^{v-\ell(G)+c}\uC(G,qr,\mathbf{a})
  \]

  Next note that if we understand $f$ on each of the left hand sides from Proposition \ref{other cases} then we know it completely.  We will consider each of these in turn, comparing to $\widetilde{\uC}$.
 
  Calculating from the definition we obtain
  \[
    \widetilde{\uC}\left(\raisebox{-.4cm}{\includegraphics[scale=0.7]{G1}}\right) = qa_{1}\widetilde{\uC}\left(\raisebox{-.4cm}{\includegraphics[scale=0.7]{H1}}\right) + (a_{2}+a_{3})\widetilde{\uC}\left(\raisebox{-.4cm}{\includegraphics[scale=0.7]{Gmv1}}\right)
    \]
    Now consider applying $f$ to 
    \[
      K_1 = \raisebox{-.4cm}{\includegraphics[scale=0.7]{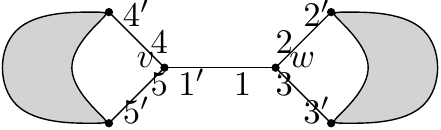}}.
    \]
    We can compute this in two possible ways, first applying the recurrence from Proposition \ref{recurrence} to $v$ and then to $w$, or first to $w$ and then to $v$, in both cases simplifying with the multiplicative property when appropriate.  Reducing first $v$ then $w$ we get
    \begin{align*}
    f(K_1) & = a_{1'}f\left(\raisebox{-.4cm}{\includegraphics[scale=0.7]{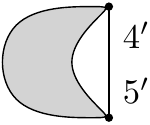}}\right)f\left(\raisebox{-.4cm}{\includegraphics[scale=0.7]{G1}}\right) + (a_4+a_5)a_1f\left(\raisebox{-.4cm}{\includegraphics[scale=0.7]{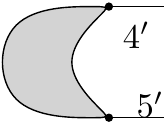}}\right)f\left(\raisebox{-.4cm}{\includegraphics[scale=0.7]{H1}}\right) \\
    & + a_5a_3f\left(\raisebox{-.4cm}{\includegraphics[scale=0.7]{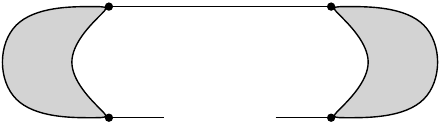}}\right) + a_5a_2f\left(\raisebox{-.4cm}{\includegraphics[scale=0.7]{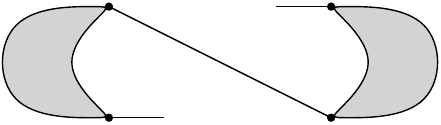}}\right) \\
    & + a_4a_3f\left(\raisebox{-.4cm}{\includegraphics[scale=0.7]{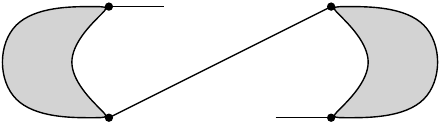}}\right) + a_4a_2f\left(\raisebox{-.4cm}{\includegraphics[scale=0.7]{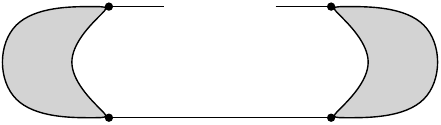}}\right)
    \end{align*}
    Reducing $w$ first and then $v$ we get
        \begin{align*}
    f(K_1) & = a_{1}f\left(\raisebox{-.4cm}{\includegraphics[scale=0.7]{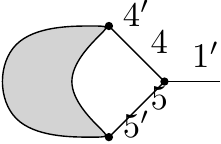}}\right)f\left(\raisebox{-.4cm}{\includegraphics[scale=0.7]{H1}}\right) + (a_2+a_3)a_1'f\left(\raisebox{-.4cm}{\includegraphics[scale=0.7]{H1rev}}\right)f\left(\raisebox{-.4cm}{\includegraphics[scale=0.7]{Gmv1}}\right) \\
    & + a_5a_3f\left(\raisebox{-.4cm}{\includegraphics[scale=0.7]{K21}}\right) + a_5a_2f\left(\raisebox{-.4cm}{\includegraphics[scale=0.7]{K31}}\right) \\
    & + a_4a_3f\left(\raisebox{-.4cm}{\includegraphics[scale=0.7]{K41}}\right) + a_4a_2f\left(\raisebox{-.4cm}{\includegraphics[scale=0.7]{K51}}\right)
    \end{align*}
        Let
        \begin{align*}
          H_1 &  = \raisebox{-.4cm}{\includegraphics[scale=0.7]{H1}} \quad 
          H'_1 = \raisebox{-.4cm}{\includegraphics[scale=0.7]{H1rev}}  \quad 
          J_1  = \raisebox{-.4cm}{\includegraphics[scale=0.7]{Gmv1}} \\
          J_1' & = \raisebox{-.4cm}{\includegraphics[scale=0.7]{Gmv1rev}} \quad
          G_1  = \raisebox{-.4cm}{\includegraphics[scale=0.7]{G1}} \quad
          G_1'  = \raisebox{-.4cm}{\includegraphics[scale=0.7]{G1rev}}
        \end{align*}
        Equating the two expressions for $f(K_1)$ we see that
        \begin{align*}
          & a_{1'}f(H_1')f(G_1) + (a_4+a_5)a_1f(J_1')f(H_1) \\
          &  =  a_{1}f(G_1')f(H_1) + (a_2+a_3)a_1'f(H_1')f(J_1)
        \end{align*}
        so the coefficient of $(a_2+a_3)$ in $f(G_1)$ must be $f(J_1)$ and the coefficient of $a_1$ in $f(G_1)$ must be some multiple of $f(H_1)$.  Moreover this multiple must be the same as for the coefficient of $a_{1'}$ in $f(G_1')$ and hence must be independent of the graph.  That is, there is some $r_2 \in R$ such that
        \[
        f\left(\raisebox{-.4cm}{\includegraphics[scale=0.7]{G1}}\right) =  r_2a_{1}f\left(\raisebox{-.4cm}{\includegraphics[scale=0.7]{H1}}\right) + (a_2+a_3)f\left(\raisebox{-.4cm}{\includegraphics[scale=0.7]{Gmv1}}\right) 
        \]

        Proceeding more tersely in the remaining cases we have
        \[
  \widetilde{\uC}\left(\raisebox{-.4cm}{\includegraphics[scale=0.7]{G4}}\right)  = q(a_1+a_2+a_3)\widetilde{\uC}\left(\raisebox{-.4cm}{\includegraphics[scale=0.7]{H14}}\right) \\
        \]
        Applying $f$ to
        \[
        \includegraphics[scale=0.7]{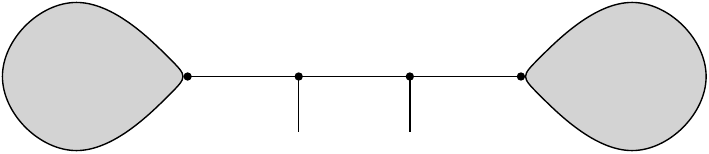}
        \]
        in the two different ways we obtain
        \[
         f\left(\raisebox{-.4cm}{\includegraphics[scale=0.7]{G4}}\right)  = r_2(a_1+a_2+a_3)\widetilde{\uC}\left(\raisebox{-.4cm}{\includegraphics[scale=0.7]{H14}}\right) \\
        \]

        We also have
        \[
          \widetilde{\uC}\left(\raisebox{-.4cm}{\includegraphics[scale=0.7]{G5}}\right)  = (a_1+a_2)\widetilde{\uC}\left(\raisebox{-.4cm}{\includegraphics[scale=0.7]{H14}}\right) + qra_3\widetilde{\uC}\left(\raisebox{-.4cm}{\includegraphics[scale=0.7]{H14}}\right)
        \]
        and applying $f$ to
        \[
        \includegraphics[scale=0.7]{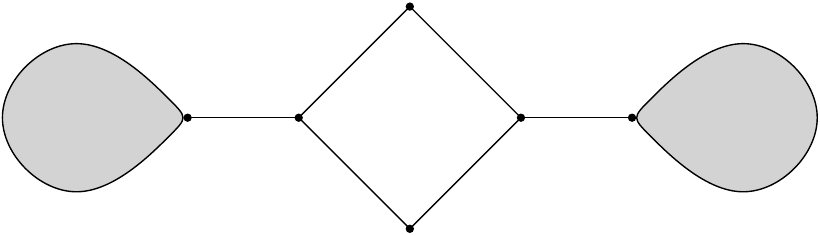}
        \]
        in both ways gives that there is an element $r_3\in R$ independent of the graph so that
        \[
        f\left(\raisebox{-.4cm}{\includegraphics[scale=0.7]{G5}}\right) = (a_1+a_2)f\left(\raisebox{-.4cm}{\includegraphics[scale=0.7]{H14}}\right) + r_3a_3f\left(\raisebox{-.4cm}{\includegraphics[scale=0.7]{H14}}\right)
        \]

        Finally,
        \[
         \widetilde{\uC}\left(\raisebox{-.4cm}{\includegraphics[scale=0.7]{G6}}\right) = q(a_1+a_2) + q^2ra_3 \qquad
  \widetilde{\uC}\left(\raisebox{-.4cm}{\includegraphics[scale=0.7]{G7}}\right) = q^2(a_1+a_2+a_3)
  \]
  and $f$ applied to 
  \[
  \includegraphics[scale=0.7]{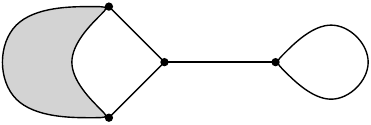} \quad \text{and} \quad \includegraphics[scale=0.7]{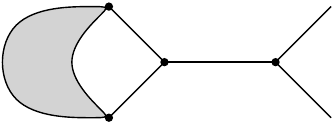}
  \]
  give
  \[
         f\left(\raisebox{-.4cm}{\includegraphics[scale=0.7]{G6}}\right) = r_2(a_1+a_2) + r_2r_3a_3 \qquad
  f\left(\raisebox{-.4cm}{\includegraphics[scale=0.7]{G7}}\right) = r_2^2(a_1+a_2+a_3)
  \]

  Temporarily move to any larger ring containing $R$ and $r_2^{-1}$.  Taking these calculations all together we have that
  \[
  f(G) = \widetilde{\uC}(G,r_2, r_3/r_2, \mathbf{a}) = r_2^{v-\ell(G)+c}\uC(G,r_3,\mathbf{a})
  \]
  which is what we wanted to prove.
\end{proof}

The structure of this universal corolla polynomial is reminiscent of that of the multivariate Tutte polynomial \cite{Smvtutte}.

Consider special evaluations of $\uC$.
{}From Theorem \ref{bin reformulation} we have 
\[
C(G) = \uC(G,0,\mathbf{a}).
\]  
Some other special evaluations are trivial; setting $r=1$ forgets the cycles and so simply sums over all sets of half edges which are homogeneous linear at each vertex, that is
\[
\uC(G,1,\mathbf{a}) = \prod_{v \in V(G)}D_v
\]
while setting all $a_i=1$ gives the generating function for $\mathcal{H}$ counting by the number of cycles of $G\smallsetminus H$ for $H \in \mathcal{H}$.  By homogeneity setting all $a_i$ to any constant simply scales the preceding polynomial.  A more interesting possibility is to impose $\sum_{h\in n(v)} a_h =0$ for each vertex $v$.
  
\begin{prop}
  Let $\widetilde{\mathbf{a}}$ be an assignment of edge labels to $G$ with the restriction that $\sum_{h\in n(v)} a_h =0$  for each vertex $v$.  Then
  \[
    \uC(G,r,\widetilde{\mathbf{a}}) = \sum_{k \geq 0}\sum_{\substack{C_1,\ldots,C_k\\ \text{disjoint cycles} \\\text{spanning }G}}(r-1)^k\prod_{j=1}^{k}\prod_{v\in C_j} a_{o(C_j,v)}
  \]
\end{prop}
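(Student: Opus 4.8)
The plan is to first establish the graph-independent intermediate identity
\[
\uC(G,r,\mathbf{a}) = \sum_{j\geq 0}(r-1)^j C^j(G),
\]
valid for all values of the half-edge variables, and then to impose the restriction $\widetilde{\mathbf{a}}$, under which the $D_v$-factors appearing in $C^j(G)$ collapse the sum onto cycle families that span $G$.

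For the intermediate identity I would recycle the coefficient count already carried out in the proof of Theorem~\ref{bin reformulation}. There it is shown that every monomial of $C^j(G)$ selects exactly one half edge at each vertex, so that $C^j(G)$ is a sum of terms $\prod_{h\in H}a_h$ with $H\in\mathcal{H}$, and that for a fixed $H$ the monomial $\prod_{h\in H}a_h$ arises once for each way of selecting $j$ pairwise disjoint cycles contained in $G\smallsetminus H$. Since $G\smallsetminus H$ is a disjoint union of cycles and paths, its cycles are precisely its cycle components, and its number of independent cycles $\ell(G\smallsetminus H)$ equals that number of cycle components; hence the coefficient of $\prod_{h\in H}a_h$ is exactly $\binom{\ell(G\smallsetminus H)}{j}$, giving
\[
C^j(G) = \sum_{H\in\mathcal{H}}\binom{\ell(G\smallsetminus H)}{j}\prod_{h\in H}a_h.
\]
Expanding $r^{\ell}=\bigl((r-1)+1\bigr)^{\ell}=\sum_{j}\binom{\ell}{j}(r-1)^j$ inside the definition of $\uC$ and interchanging the two summations then yields the intermediate identity. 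As a sanity check, $r=0$ recovers $\uC(G,0,\mathbf{a})=\sum_j(-1)^jC^j(G)=C(G)$, consistent with Theorem~\ref{bin reformulation}.

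With the intermediate identity in hand the remainder is immediate. In each term of $C^j(G)$, every vertex lying outside all of the chosen cycles contributes a factor $D_v=\sum_{h\in n(v)}a_h$, which is $0$ under the assignment $\widetilde{\mathbf{a}}$. Thus the only surviving terms are those in which the $j$ pairwise disjoint cycles cover every vertex, i.e.\ span $G$, and each such term contributes $\prod_{j'=1}^{j}\prod_{v\in C_{j'}}a_{o(C_{j'},v)}$. Substituting this evaluation of $C^j(G)\big|_{\widetilde{\mathbf{a}}}$ into the intermediate identity and renaming $j$ as $k$ produces exactly the claimed formula.

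The only genuine content, and the step I would be most careful with, is the coefficient bookkeeping in the intermediate identity: the identification of $\ell(G\smallsetminus H)$ with the number of cycle components of $G\smallsetminus H$ and the resulting binomial coefficient $\binom{\ell(G\smallsetminus H)}{j}$. Everything downstream of that identity is a one-line specialization, so the bulk of the argument is inherited directly from the proof of Theorem~\ref{bin reformulation}.
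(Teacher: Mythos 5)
Your proof is correct, but it takes a genuinely different route from the paper's. The paper proves the proposition via a Potts-model-type reformulation: for positive integer $r$ it rewrites $\uC(G,r,\mathbf{a})$ as a sum over colourings $f\colon\mathcal{C}\to\{1,\ldots,r\}$ of the cycles, analyses which colourings survive the constraint $\sum_{h\in n(v)}a_h=0$ at each vertex (exactly those with one cycle of colour $>1$ through each vertex, giving the factor $(r-1)^k$), and then extends from positive integers to all $r$ by polynomiality in $r$. You instead prove the purely algebraic identity $\uC(G,r,\mathbf{a})=\sum_{j\geq 0}(r-1)^jC^j(G)$ for arbitrary $\mathbf{a}$, by reusing the coefficient count from the proof of Theorem~\ref{bin reformulation} to get $C^j(G)=\sum_{H\in\mathcal{H}}\binom{\ell(G\smallsetminus H)}{j}\prod_{h\in H}a_h$ (your identification of $\ell(G\smallsetminus H)$ with the number of cycle components of the $2$-regular graph $G\smallsetminus H$ is sound) and then expanding $r^{\ell}=\sum_j\binom{\ell}{j}(r-1)^j$; the specialization to $\widetilde{\mathbf{a}}$ then kills every term of $C^j(G)$ containing a factor $D_v$, leaving only spanning cycle families. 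Your argument is more elementary, works for all $r$ at once with no interpolation step, and yields the reusable identity $\uC(G,r,\mathbf{a})=\sum_j(r-1)^jC^j(G)$ interpolating between $C(G)$ at $r=0$ and $\prod_v D_v$ at $r=1$; what it does not give is the colouring interpretation of $\uC$, which the paper evidently values as a statement in its own right.
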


\begin{proof}
If $r$ is a positive integer then for arbitrary edge variables we can rewrite $\uC$ as follows
\begin{equation}\label{potts type eq}
\uC(G,r,\mathbf{a}) = \sum_{f:\mathcal{C} \rightarrow \{1,\ldots r\}} \prod_{v\in V(G)}\sum_{h\in n(v)} a_h\epsilon(h,f)
\end{equation}
where 
\[
\epsilon(h,f) = 
\begin{cases}
  1 & \forall C\in \mathcal{C}, h\in C \Rightarrow f(C)=1\\
  0 & \text{otherwise}
\end{cases}
\]
To see that this holds take $H\in \mathcal{H}$.  Consider all colourings of the set of cycles with $r$ colours, choose a distinguished colour (colour 1), and require all cycles going through edges of $H$ to be of colour 1.  The cycles not in $H$ then are free to run over any colour and hence each contribute one power of $r$.  This is loosely a Potts-model-type formulation of $\uC$.

Returning to $\widetilde{\mathbf{a}}$.  Choose an $f:\mathcal{C} \rightarrow \{1,\ldots,r\}$.  $f$ along with a vertex $v$ will contribute $0$ to \eqref{potts type eq} if either all $\epsilon(h,f)$ are $0$ or all are $1$.  All are $0$ when at least one cycle not of colour 1 goes through each half edge incident to $v$.  All are $1$ when no cycles of colour 1 go through $v$.  Provided none of the $\widetilde{\mathbf{a}}$ are $0$ all other terms are nonzero.  

Thus terms which are not $0$ are indexed by the colourings of cycles where at each vertex exactly two half edges are involved in one or more cycles of colour $>1$.  If two such cycles passed through any vertex then at the first vertex where they differ all half edges would be involved in a cycle of colour $>1$, so in fact we must have exactly one cycle of colour $>1$ at each vertex.  Thus the nonzero terms of $\uC(G,r,\widetilde{\mathbf{a}})$ are indexed by disjoint sets of cycles which span $G$, and the corresponding monomial in the $\widetilde{\mathbf{a}}$ is the product of half edge variables not in the set of cycles.  Each such set of cycles contributes once for each possibly colouring of the cycles from colours $2,\ldots, r$, that is the set of cycles contributes $(r-1)^k$ times.

This gives the desired result for positive integer $r$.  For any fixed values of $\widetilde{\mathbf{a}}$, $\uC(G,r,\widetilde{\mathbf{a}})$ is a polynomial in $r$ and hence the result holds for all $r$. 
\end{proof}

\section{Higher valences}

A natural question which remains is what the story is for graphs with higher valences.  Physics doesn't need this because gauge theories only use 3 and 4 valent vertices, and the 4 valent vertices are created automatically from the differential operators built from $C(G)$, see \cite{KSvS}.  None the less there is a natural choice.

\begin{itemize}
\item Let $G$ be a graph with no degree restrictions and potentially with external edges.  For a vertex $v\in V(G)$ let 
  \[
  E_v = \sum_{\substack{j,k \in n(v) \\ j \neq k}} \prod_{\substack{i \neq j \\ i\neq k}}a_{i}.
  \]
\item For $C$ a cycle and $v$ a vertex, let 
  \[
  p(C,v) = \prod_{\substack{h \in n(v) \\ h \not\in C}}a_{h}
  \]
\item For $i\geq 0$ let 
  \[
  C^i = \sum_{\substack{C_1,C_2,\ldots C_i \in \mathcal{C} \\ C_j \text{pairwise disjoint}}} \left(\left(\prod_{j=1}^{i} \prod_{v \in C_j}p(C,v)\right)\prod_{v \not\in C_1\cup C_2\cup \cdots \cup C_i} E_v\right)
  \]
\item Let
  \[
   C_{k} = \sum_{j \geq 0} (-1)^j C_{k}^j
  \]
\end{itemize}

In this definition we are summing over sets $T$ of half edges for which all but 2 of the half edges adjacent to any vertex $v$ of $G$ are in $T$.  As a consequence $G\smallsetminus T$ is again a disjoint union of cycles and lines, so the cancellation argument, Theorem \ref{bin reformulation}, goes through unchanged.  There is also something resembling contraction-deletion,

\begin{prop}
  Let $1$ be an internal edge of $G$ with half edges $h$ and $k$. Then
  \[
    C(G) = C(G/ 1) + a_{h}C(G\smallsetminus h) + a_{k}C(G\smallsetminus k) - a_{h}a_{k}C(G\smallsetminus hk)
  \]
\end{prop}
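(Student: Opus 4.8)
The plan is to reduce everything to the set-theoretic description of $C(G)$. As the higher-valence remark observes, the analogue of Theorem \ref{bin reformulation} holds, so
\[
C(G) = \sum_{T} \prod_{j\in T} a_j,
\]
where $T$ ranges over the sets of half edges of $G$ such that exactly two half edges at each vertex are \emph{not} in $T$ and $G\smallsetminus T$ has no cycle. First I would record the structural fact underlying everything: any such $G\smallsetminus T$ is $2$-regular, hence a disjoint union of cycles and lines, and the acyclicity condition says it is a union of lines only. The whole identity will then come from sorting this sum according to the membership of the two half edges $h$ and $k$ of the edge $1$.

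Write $u$ and $w$ for the vertices incident to $h$ and $k$. Partition the admissible sets into four classes according to whether $h\in T$ and whether $k\in T$, and let $A_{00},A_{10},A_{01},A_{11}$ denote the corresponding partial sums of monomials (subscripts recording $\mathbf{1}[h\in T],\mathbf{1}[k\in T]$), so that $C(G)=A_{00}+A_{10}+A_{01}+A_{11}$. The next step is to identify the three deletion terms on the right-hand side. In $G\smallsetminus h$ the half edge $h$ is deleted and $k$ becomes external; the admissible sets $T'$ for $G\smallsetminus h$ are exactly $T\smallsetminus\{h\}$ for admissible $T$ of $G$ with $h\in T$, since removing $h$ (which lies in $T$) leaves the count of non-$T$ half edges at $u$ equal to two and leaves the edge set of $G\smallsetminus T$ unchanged, so acyclicity transfers. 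Hence $a_hC(G\smallsetminus h)=A_{10}+A_{11}$, and symmetrically $a_kC(G\smallsetminus k)=A_{01}+A_{11}$ and $a_ha_kC(G\smallsetminus hk)=A_{11}$. Substituting these into the right-hand side and cancelling by inclusion--exclusion leaves exactly $C(G/1)$ to be matched with $A_{00}$.

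The identification $C(G/1)=A_{00}$ is the step I expect to require the most care. Here $h,k\notin T$, so edge $1$ survives in $G\smallsetminus T$; at $u$ the two surviving half edges are $h$ and one other half edge $p$, and at $w$ they are $k$ and one other $q$. Contracting edge $1$ merges $u$ and $w$ and deletes $h,k$, so at the merged vertex exactly $p$ and $q$ survive: the ``all but two'' condition is preserved, and setting $S=T$ gives a bijection between admissible $T$ of $G$ with $h,k\notin T$ and admissible $S$ of $G/1$. For acyclicity I would invoke the $2$-regularity observation: $(G/1)\smallsetminus S$ is obtained from the disjoint union of cycles and lines $G\smallsetminus T$ by contracting the single edge $1$, and in such a graph contracting one edge on a line shortens that line while contracting one edge on a cycle of length $\geq 2$ leaves a shorter cycle, so this operation neither creates nor destroys a cycle. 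Thus $G\smallsetminus T$ is a union of lines if and only if $(G/1)\smallsetminus S$ is, giving $C(G/1)=A_{00}$ and hence the proposition. The one delicate point is when edge $1$ is a tadpole, i.e.\ $u=w$; there contraction of the loop falls outside the ``length $\geq 2$'' case above, and I would treat it separately against the chosen convention for contracting a self-loop.
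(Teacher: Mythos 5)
Your proof is correct and takes essentially the same route as the paper's: both expand $C(G)$ over the admissible half-edge sets $T$ and sort the monomials into four classes according to whether $h$ and $k$ lie in $T$, matching $A_{00}$ to $C(G/1)$, the classes with $k\in T$ to $a_kC(G\smallsetminus k)$, and so on, with the $A_{11}$ overcount removed by the subtracted term. Your write-up is merely more explicit than the paper's (the four-way partition, the contraction bijection on unions of cycles and lines, and the tadpole caveat are all spelled out where the paper leaves them implicit), but the underlying argument is the same.
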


\begin{proof}
  Consider a set $T$ of half edges of $G$ for which all but two of the half edges adjacent to any vertex $v$ of $G$ are in $T$ and $G\smallsetminus T$ has no cycles.
  
  If neither $h$ nor $k$ are in $T$, then edge 1 can be contracted without changing the cycle structure of $G\smallsetminus T$ and the new vertex still has exactly two edges in $T$.

  If $h$ is not in $T$ but $k$ is, then edge 1 can be cut without changing the cycle structure of $G\smallsetminus T$.  To preserve all but two edges in $T$ at every vertex, we must not remove $h$ from $G$.  Thus this term appears in $a_kC(G\smallsetminus k)$. 

  However, $a_kC(G\smallsetminus k)$ also contains the terms where $h$ happens to be in the set of half edges.  That is, it contains the terms of $C(G)$ where both $h$ and $k$ are in $T$.  Such terms are exactly the terms of $a_ha_kC(G\smallsetminus hk)$.

  The same argument applies with $a$ and $b$ swapped.  So we get that all terms of $C(G)$ appear in $C(G/ 1) + a_hC(G\smallsetminus h) + a_kC(G\smallsetminus k)$ with those in $a_ha_kC(G\smallsetminus hk)$ appearing twice and the others appearing once.  Subtracting off the over counting gives the result.
\end{proof}

\bibliographystyle{plain}
\bibliography{main}

\end{document}